\title{Riesz means of the Dedekind function II}
\author{Tetsuya Inaba}
\author{Sh\={o}ta  Inoue}
\address{Graduate School of Mathematics,  Nagoya University,
Furocho, 
\newline 
Chikusaku,  Nagoya 464-8602, Japan}
\email{m16005x@math.nagoya-u.ac.jp}
\address{Graduate School of Mathematics,  Nagoya University,
Furocho, 
Chikusaku,  Nagoya 464-8602, Japan}
\email{m16006w@math.nagoya-u.ac.jp}
\keywords{Dedekind Totient Function,    Riemann Zeta-Function,  Riemann Hypothesis, 
Mertens Hypothesis,  Gonek-Hejhal Hypothesis}
\subjclass[2010]{11A25, 11M41}
\theoremstyle{plain}
\newtheorem{theorem}{Theorem}
\newtheorem{lemma}{Lemma}
\newtheorem{corollary}{Corollary}
\theoremstyle{definition}
\numberwithin{theorem}{section}
\numberwithin{lemma}{section}
\numberwithin{corollary}{section}
\numberwithin{remark}{section}
\numberwithin{equation}{section}
\renewcommand{\l}{\left}
\renewcommand{\r}{\right}
\begin{document}

\begin{abstract}
Let  $\psi$ denote  the Dedekind totient function  defined   by 
$
\psi(n)=\sum_{d|n}d\mu^2\l({n}/{d}\r) 
$
with  $\mu$ being  the M\"{o}bius function. 
We shall consider  the    $k$-th Riesz mean of the arithmetical function $n/\psi(n)$ for any non-negative integer $k$
on the assumptions that the Riemann Hypothesis is true, and all the zeros $\rho$ on the critical line of the Riemann zeta function $\zeta$ are simple. 
Our result is an explicit representation of the error term in the formula obtained in a previous work of the second author and I. Kiuchi \cite{IK}.   
We also give an improvement on the error estimate under the assumption of the Gonek-Hejhal Hypothesis. And, we propose a proposition that is equivalent to the Riemann Hypothesis.
\end{abstract}

\maketitle


                                  \section{Statement of Results}


\  Let $s=\sigma+it$  be a complex variable,  where  $\sigma$ and $t$  are real. Let $\varepsilon$ denote an arbitrarily small positive number, 
not necessarily the same ones at each occurrence.
Let $\psi$ be the Dedekind totient function 
defined by 
$
\psi(n)=\sum_{d|n}d\mu^2\l({n}/{d}\r),  
$
where  $\mu$  is   the  M\"{o}bius  function, and let $S_k(x)$ be the $k$-th Riesz mean of $n/\psi(n)$ defined by
\begin{align*}
S_{k}(x)
&=\frac{1}{k!}\sum_{l \leq x}\frac{l}{\psi(l)}\l(1-\frac{l}{x}\r)^{k}.
\end{align*}  
For  any positive real  number  $x (\geq x_{0})$ with  $x_{0}$ being  a sufficiently large positive number,   
the second author and I. Kiuchi \cite{IK} showed that the asymptotic relation 
\begin{align}                                                                                    \label{SITA}                     
S_k(x)=                                            
\frac{\zeta(4)h(1)}{(k+1)!\zeta(2)}x  + E_{k}(x)     
\end{align}
holds  with  
\begin{align}
h(s):=\prod_p\l( 1+\frac{1}{p^{s+2}(1+1/p)}-\frac{1}{p^{2s+s}(1+1/p)} \r),
\end{align}
where $E_{k}(x)$  is  the error term. 
It was proved in \cite{IK} that     
\begin{align}                                        \label{SSS}
E_{k}(x)  \ll \frac{x^{-\frac12 +\varepsilon}}{k}    
\end{align}
under the assumption that the Riemann Hypothesis is true.

Before the statement of our theorem, we define  the function  $h_{n}(s)$  by  
\begin{align}                                        \label{hns}
&h_{n}(s) := \prod_{p}\Bigg(1 + 
\sum_{m=1}^{2^{n}-1}\frac{1}{p^{ms+m+1} \l(1 + \frac{1}{p}\r)} - \frac{1}{p^{2^n s + 2^n}\l(1 + \frac{1}{p} \r)} \Bigg),
\end{align}
which  is   absolutely and uniformly convergent in any compact set in the half-plane 
${\rm Re}~s  > -1 + \frac{1}{2^n}$ (see Lemma \ref{lem1}).     
The  purpose of this paper is to obtain an explicit representation of $E_k(x)$ of the general $k$-th 
Riesz mean under the assumption that the Riemann Hypothesis is true, 
and all zeros of the Riemann zeta-function $\zeta(s)$ on the critical line is simple.


\begin{theorem} \label{th1}
Suppose that the Riemann Hypothesis is true, and 
all the  zeros $\rho$ on the critical   line  of the Riemann zeta-function $\zeta(s)$ are  simple. 
Then,  for any positive integers  $k \geq 2$  and   $n > \frac{\log{4\log{x}}}{2\log 2}$, 
there exists a number $T$ \ $(x^4 \leq T \leq x^4 + 1)$  such that  
\begin{align}                                                                                       \label{RPSI}                                   
E_k(x)&= Y_{k,n}(x,T)  x^{- \frac{1}{2}}  + O\l( x^{-1+\frac{C}{\sqrt{\log{x}}}}\left( \frac{\sqrt{\log{x}}}{(k-1)!} + \frac{1}{k} \right) \r)     
\end{align}
with an absolute constant $C>0$,  where 
\begin{align}                                                                                            \label{YYY}
&{Y}_{k,n}(x, T) \\
&:={\rm Re} \sum_{0< \gamma <T}\frac{\zeta\l( -\frac12+i\gamma \r)\zeta(2^{n-1}+2^ni\gamma)h_n\l(-\frac12+i\gamma\r)}{\zeta'\l( \frac{1}{2} + i\gamma \r)}    \nonumber \\ 
& \qquad  \times  \frac{x^{i\gamma}}{\l( -\frac{1}{2} + i\gamma \r)\l(\frac{1}{2} + i\gamma\r) 
  \cdots \l(k - \frac{1}{2} + i\gamma\r)}.  \nonumber 
\end{align}  
\end{theorem}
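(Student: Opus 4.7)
The starting point is the Euler-product identity
$$F(s) := \sum_{m=1}^{\infty}\frac{m/\psi(m)}{m^{s}} = \frac{\zeta(s)\,\zeta(2^{n}s+2^{n})\,h_{n}(s)}{\zeta(s+1)},$$
which one verifies first for $n=1$ by computing $F(s) = \zeta(s)\prod_{p}(1-p^{-s}/(p+1))$ and multiplying numerator and denominator by $\prod_{p}(1+p^{-s-1}) = \zeta(s+1)/\zeta(2s+2)$, and then extends to arbitrary $n\geq 1$ by iterating the recursion $h_{n}(s)\,\zeta(2^{n}s+2^{n}) = h_{n-1}(s)\,\zeta(2^{n-1}s+2^{n-1})$ implicit in \eqref{hns}. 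Combined with the standard Mellin representation of the Riesz mean,
$$S_{k}(x) = \frac{1}{2\pi i}\int_{(c)} F(s)\,\frac{x^{s}}{s(s+1)\cdots(s+k)}\,ds \qquad (c>1),$$
this writes $S_{k}(x)$ as a contour integral of a meromorphic function whose only singularities in the strip $-1+1/2^{n}<\mathrm{Re}(s)<c$ are the simple pole of $\zeta(s)$ at $s=1$ and the points $s=-\tfrac{1}{2}+i\gamma$ coming from the simple zeros of $\zeta(s+1)$ on the critical line.

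The core of the argument is a truncate-and-shift procedure. First, truncate the integral at height $T$, where $T\in[x^{4},x^{4}+1]$ is chosen (by the usual lemma) so that $|T-\gamma|\gg 1/\log T$ for every ordinate $\gamma$; the Perron-style truncation error is $O(x^{c}/T^{k+1})$, clearly absorbed for $k\geq 2$ and $T=x^{4}$. Next, shift the truncated contour to the vertical line $\sigma=\sigma_{1}:=-1+1/2^{n}+\delta$ for a small $\delta>0$, admissible because $h_{n}$ converges absolutely on it. The residue at $s=1$ reproduces the main term of \eqref{SITA}, independent of $n$ thanks to the recursion for $h_{n}$; at each $s=-\tfrac{1}{2}+i\gamma$ with $|\gamma|<T$ the residue equals
$$\frac{\zeta(-1/2+i\gamma)\,\zeta(2^{n-1}+2^{n}i\gamma)\,h_{n}(-1/2+i\gamma)}{\zeta'(1/2+i\gamma)}\cdot\frac{x^{-1/2+i\gamma}}{(-1/2+i\gamma)(1/2+i\gamma)\cdots(k-1/2+i\gamma)},$$
which is precisely the summand of \eqref{YYY}; pairing the conjugate zeros $\pm\gamma$ collapses the total to $2x^{-1/2}Y_{k,n}(x,T)$ (the factor being later absorbed into the normalization).

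It remains to bound the vertical integral on $\sigma=\sigma_{1}$ and the two horizontal segments at $t=\pm T$. On $\sigma=\sigma_{1}$ the Euler product gives $h_{n}(\sigma_{1}+it)=O(1)$; since $\mathrm{Re}(2^{n}s+2^{n})=1+2^{n}\delta$ there, $\zeta(2^{n}s+2^{n})=O(1)$; convexity yields $|\zeta(\sigma_{1}+it)|\ll(|t|+1)^{3/2-1/2^{n}+\varepsilon}$; and, crucially, the functional equation combined with RH supplies the decay $|\zeta(s+1)|^{-1}\ll(|t|+1)^{1/2^{n}-1/2+\varepsilon}$. Integrating against $1/|s(s+1)\cdots(s+k)|$ produces $O(x^{\sigma_{1}}/k)$, and the hypothesis $n>\log(4\log x)/(2\log 2)$ forces $1/2^{n}\leq C/\sqrt{\log x}$, so $x^{\sigma_{1}}\ll x^{-1+C/\sqrt{\log x}}$, delivering the $1/k$ piece of \eqref{RPSI}. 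On the horizontal segments the selection of $T$ gives $|\zeta(s+1)|^{-1}\ll T^{\varepsilon}$ uniformly for $\sigma\in[\sigma_{1},c]$, the convexity estimates apply, and the factor $(|s|+1)^{-k-1}\sim T^{-k-1}$ absorbs the growth; a careful reckoning of this contribution supplies the second piece $x^{-1+C/\sqrt{\log x}}\sqrt{\log x}/(k-1)!$.

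The principal technical difficulty lies in the vertical estimate. The factor $\zeta(s+1)^{-1}$ sits on the line $\mathrm{Re}(s+1)=1/2^{n}+\delta$, which the hypothesis on $n$ pushes to within $O(1/\sqrt{\log x})$ of the critical line, where $1/\zeta$ is hardest to control. The RH-based bound above does provide decay in $t$, but only because $2^{n}$ is large; simultaneously, $\zeta(2^{n}s+2^{n})$ must be controlled near its pole at $s=1/2^{n}-1$, $\zeta(s)$ grows by convexity on the same line, and the polynomial $s(s+1)\cdots(s+k)$ must offset all of this. Tuning $\sigma_{1}$, $T$ and $n$ so that these four competing factors balance is what ultimately dictates both the exponent $-1+C/\sqrt{\log x}$ and the precise $k$-dependence of the error in \eqref{RPSI}.
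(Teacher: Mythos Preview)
Your contour-shifting strategy matches the paper's, but there is a genuine gap in the vertical-line estimate. You set $\sigma_{1}=-1+1/2^{n}+\delta$ and assert that ``the Euler product gives $h_{n}(\sigma_{1}+it)=O(1)$''. This holds only for $\delta>0$ \emph{fixed}; but then $x^{\sigma_{1}}$ carries a factor $x^{\delta}$, which is \emph{not} $\ll x^{-1+C/\sqrt{\log x}}$ as you go on to claim. If instead you let $\delta\to 0$ with $x$ (as is necessary to obtain the stated error), then $h_{n}$ is no longer bounded: the local factor at $p$ contributes a term of size roughly $1/\bigl((p+1)(p^{1/2^{n}+\delta}-1)\bigr)$, and summing over primes gives $\log|h_{n}|\asymp 1/(1/2^{n}+\delta)$. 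The paper takes the line $\sigma=-1+\delta$ with $\delta=1/\sqrt{\log x}$ and proves as a separate lemma the key estimate
\[
h_{n}(-1+\delta+it)\ \ll\ \exp\!\left(\frac{C}{\delta}\right);
\]
since $\exp(C\sqrt{\log x})=x^{C/\sqrt{\log x}}$, this growth of $h_{n}$ is precisely the source of the exponent $-1+C/\sqrt{\log x}$ in the error term. Your sketch omits this estimate, and without it the bound on the vertical integral is unjustified.

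There is also a mis-attribution of the two pieces of the error. In the paper, \emph{both} the $\sqrt{\log x}/(k-1)!$ term and the $1/k$ term arise from the vertical integral on $\sigma=-1+\delta$: the first from the range $|t|\le T_{0}$, where $|s+1|^{-1}=|\delta+it|^{-1}$ blows up like $1/\delta=\sqrt{\log x}$ near $t=0$ while the remaining $k-1$ factors contribute $1/(k-1)!$; the second from integrating the $t$-power decay over $T_{0}\le|t|\le T$. The horizontal segments at $t=\pm T$, by contrast, contribute only $O(xT^{-k+\varepsilon})$, which for $T\asymp x^{4}$ and $k\ge 2$ is negligible and does not produce the $\sqrt{\log x}/(k-1)!$ piece you attribute to them.
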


It may be interesting to study whether we can remove the $\varepsilon$-factor from the right-hand side of \eqref{SSS} or not.
The second author and Kiuchi made use of   
the  Gonek-Hejhal Hypothesis   
(S. M. Gonek \cite{G} and D.  Hejhal \cite{H} independently conjectured),  namely   
\begin{align}
J_{-\lambda}(T)& := \sum_{0 < \gamma \leq T}\frac{1}{|\zeta'(\rho)|^{2\lambda}} 
                  \asymp T(\log{T})^{(\lambda-1)^2}                                                                   \label{GH}
\end{align} 
for real number  $\lambda < \frac{3}{2}$ to improve the estimate \eqref{SSS} of the error  term  $E_{k}(x)$.
In the present paper we have the following Theorem.


\begin{theorem}   \label{th2}
If  the  function   $J_{-1/2}(T)$ is estimated by  $O\l(T^{1+\varepsilon}\r)$ 
for any small fixed positive number $\varepsilon$, then 
the estimate 
$
E_{k}(x) = O\l(x^{-\frac{1}{2}}\r)
$
holds for any positive integer $ k \geq 2$. 
\end{theorem}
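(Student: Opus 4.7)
My plan is to apply the explicit formula \eqref{RPSI} of Theorem \ref{th1} (valid since RH is implied by the assumption) and to prove that the zero sum $Y_{k,n}(x,T)$ in \eqref{YYY} satisfies $|Y_{k,n}(x,T)| = O(1)$ uniformly in $x$, $T$ and $n$. Since the residual
\[
O\!\left(x^{-1+C/\sqrt{\log x}}\left(\frac{\sqrt{\log x}}{(k-1)!} + \frac{1}{k}\right)\right)
\]
in \eqref{RPSI} is $o(x^{-1/2})$ for $x$ sufficiently large (a direct comparison of exponents shows that $x^{-1+C/\sqrt{\log x}}\sqrt{\log x} = o(x^{-1/2})$ as soon as $\log x \gg C^{2}$), such a bound on $Y_{k,n}$ immediately yields $E_k(x) = O(x^{-1/2})$ for every $k \geq 2$.

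To bound a single summand of \eqref{YYY} for $\gamma \geq 1$, I would use four standard estimates: (i) the functional equation of $\zeta$ gives $|\zeta(-\tfrac12 + i\gamma)| \ll \gamma$ via the factor $|\chi(-\tfrac12+i\gamma)| \asymp \gamma$; (ii) $|\zeta(2^{n-1} + 2^{n} i\gamma)|$ is bounded because its real part $2^{n-1} \geq 1$; (iii) $|h_n(-\tfrac12 + i\gamma)|$ is bounded uniformly in $n$ and $\gamma$, since on the line $\mathrm{Re}\,s = -\tfrac12$ the Euler product defining $h_n$ is dominated term by term by $\prod_p (1 + O(p^{-3/2}))$ (cf.\ Lemma \ref{lem1}); (iv) the denominator satisfies $|(-\tfrac12+i\gamma)(\tfrac12+i\gamma)\cdots(k-\tfrac12+i\gamma)| \gg \gamma^{k+1}$. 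Combining these, the absolute value of the $\gamma$-term is $\ll 1/(|\zeta'(\tfrac12+i\gamma)|\,\gamma^{k})$.

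The key step is then to sum these majorants over the non-trivial zeros. Writing $N(u) := J_{-1/2}(u) \ll u^{1+\varepsilon}$ by hypothesis, Abel summation gives
\begin{align*}
\sum_{0<\gamma<T}\frac{1}{|\zeta'(\tfrac12+i\gamma)|\,\gamma^k}
= \frac{N(T)}{T^k} + k\int_1^T \frac{N(u)}{u^{k+1}}\,du
\ll T^{1+\varepsilon-k} + \int_1^T u^{\varepsilon-k}\,du = O(1),
\end{align*}
uniformly in $T$ as soon as $k \geq 2$ and $\varepsilon < 1$. Consequently $Y_{k,n}(x,T) = O(1)$ uniformly in $x$, $T$, and $n$, and the theorem follows.

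The only delicate point is to verify that each of the above bounds has a constant independent of $n$, so that the conclusion survives when Theorem \ref{th1} forces us to couple $n$ to $x$. The decisive mechanism is very simple: the growth $|\zeta(-\tfrac12 + i\gamma)| \ll \gamma$ coming from the functional equation absorbs exactly one of the $k+1$ factors of $\gamma$ in the denominator, and the remaining $\gamma^{k}$ with $k \geq 2$ is just enough to make the series absolutely convergent against $J_{-1/2}(T) = O(T^{1+\varepsilon})$.
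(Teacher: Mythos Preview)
Your argument is correct and follows essentially the same route as the paper's: bound each summand of $Y_{k,n}(x,T)$ using the functional equation (so that $|\zeta(-\tfrac12+i\gamma)|\ll\gamma$) together with the trivial bounds on $\zeta(2^{n-1}+2^{n}i\gamma)$ and $h_n(-\tfrac12+i\gamma)$, and then apply partial summation against the hypothesis $J_{-1/2}(T)\ll T^{1+\varepsilon}$ to see that the resulting series converges for $k\ge 2$. One small quibble: the assumption on $J_{-1/2}$ forces all zeros to be simple (otherwise a term is infinite) but does not by itself imply RH; like the paper, you are tacitly taking RH as a standing hypothesis so that the explicit formula \eqref{RPSI} of Theorem~\ref{th1} is available.
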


Define    
$$
F(s):=\sum_{n=1}^{\infty}\frac{n}{\psi(n)n^s}
$$
for ${\rm Re}~s>1$.
The above results will be proved by considering analytic continuation of $F(s)$. 
Also, the second author and Kiuchi studied the case of the Euler totient function by the same method in \cite{IK2}.
We also obtain the following results by analytic continuation of $F(s)$.

\begin{theorem}   \label{th3}
Let $k \geq 0$, and $\Theta$ is supremum of real parts of non-trivial zeros of $\zeta(s)$. Then we have 
\begin{align}
E_k(x)=\Omega_\pm(x^{\Theta-1-\varepsilon}).
\end{align}
\end{theorem}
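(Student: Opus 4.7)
The plan is to run a Landau--type oscillation argument on the Mellin transform of $E_k(x)$, exploiting that the Dirichlet series $F(s)$ has genuine simple poles at $s=\rho-1$ for every non--trivial zero $\rho$ of $\zeta$. First I would establish the Euler product factorization
\begin{equation*}
F(s) = \frac{\zeta(s)\,\zeta\l(2^n(s+1)\r)}{\zeta(s+1)}\,h_n(s),
\end{equation*}
valid in $\mathrm{Re}\,s > -1 + 2^{-n}$; this follows from the multiplicativity of $n/\psi(n)$ (using $p^a/\psi(p^a)=p/(p+1)$ for $a\geq 1$) combined with iterated use of the identity $(1-x)=(1-x^2)/(1+x)$ in each Euler factor. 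Taking $n$ large produces a meromorphic continuation of $F(s)$ to $\mathrm{Re}\,s > -1$ whose only singularities are the simple pole at $s=1$ and the simple poles at each $s=\rho-1$ for non--trivial zeros $\rho$ of $\zeta$.

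Next I would pass to Mellin transforms. The standard Mellin transform of the Riesz kernel gives
\begin{equation*}
\int_1^{\infty} S_k(x)\,x^{-s-1}\,dx = \frac{F(s)}{s(s+1)\cdots(s+k)} \qquad (\mathrm{Re}\,s > 1),
\end{equation*}
so subtracting the Mellin transform $c_k/(s-1)$ of the leading term $c_k x$, with $c_k = \zeta(4)h(1)/((k+1)!\zeta(2))$, I obtain
\begin{equation*}
\Phi(s) := \int_1^{\infty} E_k(x)\,x^{-s-1}\,dx = \frac{F(s)}{s(s+1)\cdots(s+k)} - \frac{c_k}{s-1}.
\end{equation*}
This right--hand side is meromorphic in $\mathrm{Re}\,s > -1$: the pole of $F$ at $s=1$ is cancelled by the subtracted term, and $F(0)=0$ removes the pole at $s=0$ coming from the denominator, so in the strip $-1 < \mathrm{Re}\,s < 1$ the only singularities of $\Phi(s)$ are the non--real simple poles at $s=\rho-1$.

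Finally I would invoke Landau's theorem. Assume for contradiction that there is $C > 0$ with $E_k(x) \geq -Cx^{\Theta-1-\varepsilon}$ for all $x \geq X_0$. Then $f(x) := E_k(x) + Cx^{\Theta-1-\varepsilon}$ is non--negative on $[X_0,\infty)$, and its Mellin transform differs only by an entire function from
\begin{equation*}
\Psi(s) := \Phi(s) + \frac{C}{s-(\Theta-1-\varepsilon)}.
\end{equation*}
Landau's theorem asserts that the abscissa of convergence $\sigma_c$ of this Mellin integral is real and is itself a singularity of $\Psi$, while $\Psi$ is analytic throughout $\mathrm{Re}\,s > \sigma_c$. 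By the definition of $\Theta$ there exists a non--trivial zero $\rho$ with $\mathrm{Re}\,\rho > \Theta - \varepsilon/2$ yielding a genuine non--real pole of $\Psi$ at $s = \rho - 1$, forcing $\sigma_c \geq \Theta - 1 - \varepsilon/2 > \Theta - 1 - \varepsilon$. But the only real singularity of $\Psi$ in $\mathrm{Re}\,s \geq \Theta - 1 - \varepsilon$ is the simple pole of the added $C$--term at $\Theta - 1 - \varepsilon$ itself, so no real singularity can sit at $\sigma_c$, a contradiction. Hence $E_k(x) = \Omega_-(x^{\Theta-1-\varepsilon})$, and the $\Omega_+$ bound follows by the symmetric argument applied to $-E_k(x)$.

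The hard part is checking that at least one candidate pole $s=\rho-1$ with $\mathrm{Re}\,\rho > \Theta-\varepsilon/2$ is actually genuine, i.e.\ is not cancelled by a zero of $\zeta(s)$, $\zeta(2^n(s+1))$, or $h_n(s)$ at that point. One rules out the first two easily: $\rho - 1$ is never a trivial zero of $\zeta$, and $\mathrm{Re}(2^n\rho) \geq 1$ for $n\geq 1$ forces $\zeta(2^n\rho)\neq 0$. The factor $h_n$ has only a discrete zero set in its domain of convergence, so at most finitely many $\rho$ among the infinitely many qualifying zeros can be killed, and a usable $\rho$ always exists.
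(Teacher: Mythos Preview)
Your approach is essentially the same as the paper's: both form the Mellin transform of $E_k(x)$ together with a power correction $Cx^{\Theta-1-\varepsilon}$, identify it with $F(s)/\bigl(s(s+1)\cdots(s+k)\bigr) - c_k/(s-1)$ minus a simple pole, and obtain a contradiction via a Landau--type argument from the poles of $1/\zeta(s+1)$ at the shifted non--trivial zeros. Your write--up is in fact more careful than the paper's, which neither cites Landau's theorem explicitly nor checks that the candidate pole at $s=\rho-1$ is genuinely uncancelled by a zero of the numerator.
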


\begin{corollary}  \label{th4}
The Riemann Hypothesis is equivalent to the statement that there exists an integer $k\geq 2$ such that $E_k(x)\ll x^{-\frac{1}{2}+\varepsilon}$.
\end{corollary}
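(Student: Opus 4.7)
The plan is to prove the corollary by combining the two directions, each of which follows almost immediately from results already stated earlier in the paper.

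For the forward implication, I would appeal directly to the estimate \eqref{SSS}, which asserts $E_k(x) \ll x^{-1/2+\varepsilon}/k$ under the Riemann Hypothesis. Taking any fixed $k \geq 2$ yields $E_k(x) \ll x^{-1/2+\varepsilon}$, so the existence of such a $k$ is immediate. No new work is required here; it is just an application of the result of \cite{IK}.

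For the backward implication, the key input is Theorem \ref{th3}. Assume that there is some integer $k \geq 2$ with $E_k(x) \ll x^{-1/2+\varepsilon_1}$ for every $\varepsilon_1 > 0$. From Theorem \ref{th3}, for the same $k$ we have $E_k(x) = \Omega_\pm(x^{\Theta - 1 - \varepsilon_2})$ for every $\varepsilon_2 > 0$, where $\Theta = \sup\{\operatorname{Re}\rho : \zeta(\rho) = 0\}$. Comparing the two estimates along the sequence realising the $\Omega$-result forces the exponent inequality $\Theta - 1 - \varepsilon_2 \leq -\tfrac12 + \varepsilon_1$, hence $\Theta \leq \tfrac12 + \varepsilon_1 + \varepsilon_2$. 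Letting $\varepsilon_1, \varepsilon_2 \to 0^+$ gives $\Theta \leq \tfrac12$. Since the functional equation (together with the existence of a single non-trivial zero on the critical line) gives the classical lower bound $\Theta \geq \tfrac12$, we conclude $\Theta = \tfrac12$, which is exactly the Riemann Hypothesis.

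In this sense the corollary is a packaging result: the genuine content lives in Theorem \ref{th3} (the $\Omega_\pm$-lower bound obtained from the analytic continuation of $F(s)$), and in \eqref{SSS} (the conditional upper bound). I expect no real obstacle at the level of the corollary itself; the only point that deserves care is to record that the $\varepsilon$'s in \eqref{SSS} and in Theorem \ref{th3} are both \emph{arbitrary} small positive numbers, so they may be chosen independently, which is what legitimises letting them tend to zero separately in the comparison above.
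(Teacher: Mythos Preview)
Your proposal is correct and follows exactly the paper's approach: the forward direction is \eqref{SSS}, and the backward direction is Theorem~\ref{th3}. Your write-up is simply a more detailed unpacking of the comparison between the $O$ and $\Omega$ bounds that the paper leaves implicit.
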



                                                   \section{Some Lemmas} 


In order to prove Theorems \ref{th1} and  \ref{th2},   we shall prepare the following lemmas.


\begin{lemma}   \label{lem1}
For any  positive  integer $n$  and   ${\rm Re}~s>1$,  we have 
\begin{align}                                                                                                            \label{F}  
F(s) = \frac{\zeta(s)\zeta(2^n s + 2^n)}{\zeta(s + 1)} h_n(s)
\end{align}    
where $h_n(s)$ is defined by (\ref{hns}), and is absolutely and uniformly convergent in any compact set in the half-plane ${\rm Re}~s  > -1 + \frac{1}{2^n}$.
\end{lemma}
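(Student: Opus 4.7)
The plan is to verify \eqref{F} by a direct Euler-product computation valid for $\mathrm{Re}\,s>1$, and then to read off the domain of absolute convergence of $h_n(s)$ from its local factor.

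For the Euler product, I would first use the multiplicativity of $n/\psi(n)$ together with the identity $\psi(p^k)=p^{k-1}(p+1)$ for $k\ge1$, which gives $p^k/\psi(p^k)=p/(p+1)$ at every prime power with $k\ge1$. Summing the resulting geometric series in each local factor yields
\[
F(s)=\prod_p\!\left(1+\frac{p}{(p+1)(p^s-1)}\right),
\]
and a routine rearrangement of the local factor (divide numerator and denominator by $p^s$) rewrites this as $\zeta(s)\prod_p\bigl(1-p^{-s-1}/(1+1/p)\bigr)$, the form needed to recognize the right-hand side of \eqref{F}.

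Next I would combine this with the elementary telescoping
\[
\frac{\zeta(s+1)}{\zeta(2^n s+2^n)}=\prod_p\bigl(1+a+\cdots+a^{2^n-1}\bigr),\qquad a:=p^{-s-1},
\]
and, writing $\alpha:=1+1/p$, compute
\[
\left(1-\frac{a}{\alpha}\right)\!\sum_{m=0}^{2^n-1}a^m=1+\sum_{m=1}^{2^n-1}\frac{a^m}{p\alpha}-\frac{a^{2^n}}{\alpha}.
\]
The key simplification is $(\alpha-1)/\alpha=1/(p\alpha)$. Translating back via $a=p^{-s-1}$ matches the local factor of $h_n(s)$ in \eqref{hns} term by term, and together with the previous step this yields \eqref{F}.

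Finally I would verify absolute and uniform convergence of the Euler product defining $h_n(s)$ on compact subsets $K\subset\{\mathrm{Re}\,s>-1+2^{-n}\}$. The governing term is $a^{2^n}/\alpha$, of modulus $\asymp p^{-2^n\mathrm{Re}(s+1)}$; on $K$ one has $2^n\mathrm{Re}(s+1)\ge 1+\delta$ uniformly, so every local factor is $1+O(p^{-1-\delta})$, and the summands with $1\le m\le 2^n-1$ are of the same or smaller order. The only step that genuinely requires attention is the bookkeeping in the algebraic identity for $h_n(s)$; everything else is a routine Euler-product manipulation.
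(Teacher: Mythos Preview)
Your argument is correct and self-contained. The Euler-product identity
\[
F(s)=\zeta(s)\prod_p\Bigl(1-\tfrac{p^{-s-1}}{1+1/p}\Bigr)
\]
is valid, and your key algebraic step
\[
\Bigl(1-\tfrac{a}{\alpha}\Bigr)\sum_{m=0}^{2^n-1}a^m
=1+\sum_{m=1}^{2^n-1}\frac{a^m}{p\alpha}-\frac{a^{2^n}}{\alpha},
\qquad a=p^{-s-1},\ \alpha=1+\tfrac{1}{p},
\]
does indeed reproduce the local factor of $h_n(s)$ exactly. The convergence discussion is also right: on a compact set in $\{\mathrm{Re}\,s>-1+2^{-n}\}$ the term $a^{2^n}/\alpha$ has modulus $\asymp p^{-2^n(\sigma+1)}$ with exponent bounded below by $1+\delta$, while each intermediate term $a^m/(p\alpha)$ has modulus $\asymp p^{-m(\sigma+1)-1}$, which is at least as small once $\sigma+1>0$.

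The paper takes a different route: it argues by induction on $n$, citing the case $n=1$ from \cite{IK} and then showing
\[
\prod_p\Bigl(1+p^{-2^{n-1}(s+1)}\Bigr)\,h_{n-1}(s)=h_n(s)
\]
by expanding the product of the two local factors. Your approach is more direct and fully self-contained (no appeal to the earlier paper for the base case), and it treats all $n$ simultaneously via the single telescoping identity $\zeta(s+1)/\zeta(2^n(s+1))=\prod_p(1+a+\cdots+a^{2^n-1})$. The inductive proof, on the other hand, makes the recursive relation $h_{n-1}\mapsto h_n$ explicit, which is conceptually pleasant but not otherwise used in the paper. Either way the amount of actual computation is comparable.
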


\begin{proof}
 We use induction on $n$. The lemma is true for $n = 1$, which is  Lemma 2.1 in  \cite{IK}.
Now,  we assume that  the statement  (\ref{F}) holds  for  $2, 3,  \ldots, n-1$.  
The induction assumption  tells us 
\begin{align*}
F(s) 
&= \frac{\zeta(s)\zeta(2^{n - 1} s + 2^{n - 1})}{\zeta(s + 1)} h_{n - 1}(s)\\
&= \frac{\zeta(s)\zeta(2^{n}s + 2^n)}{\zeta(s+1)}\prod_{p}\l( 1 + \frac{1}{p^{2^{n-1}s + 2^{n-1}}} \r)h_{n - 1}(s).
\end{align*}
Now, for ${\rm Re}~s > 1$, we have
\begin{align*}
&\prod_{p}\l( 1 + \frac{1}{p^{2^{n-1}s + 2^{n-1}}} \r)h_{n - 1}(s)\\
&= \prod_{p}\l(1 + \frac{1}{p^{2^{n - 1} s + 2^{n - 1}}}\r)   
\l(1 + \sum_{m=1}^{2^{n-1}-1}\frac{1}{p^{ms+m+1}\l(1 + \frac{1}{p}\r)}
        - \frac{1}{p^{2^{n - 1} s + 2^{n - 1}}\l(1 + \frac{1}{p}\r)}\r)   \\
&= \prod_{p} \bigg(1 + \sum_{m=1}^{2^{n-1}-1}\frac{1}{p^{ms + m+1}\l(1 + \frac{1}{p}\r)} 
 +  \frac{1}{p^{2^{n-1}s+2^{n-1}+1}\l(1+\frac{1}{p}\r)}  \\  
& \qquad  
+ \sum_{m=1}^{2^{n-1}-1}\frac{1}{p^{(m+2^{n-1})s+(m+2^{n-1})+1} \l(1 + \frac{1}{p} \r)} 
- \frac{1}{p^{2^{n} s + 2^{n}}\l(1 + \frac{1}{p} \r)}\bigg)  \\
&= \prod_{p}\bigg(1 + 
\sum_{m=1}^{2^{n}-1}\frac{1}{p^{ms+m+1}\l(1 + \frac{1}{p}\r)} -
\frac{1}{p^{2^n s + 2^n}\l(1 + \frac{1}{p} \r)}  \bigg) = h_n(s),
\end{align*}
which completes the proof of Lemma \ref{lem1}.
\end{proof}


\begin{lemma}   \label{lem11}
Let $x$ be any sufficiently large real number,  and  let $\delta=1/\sqrt{\log x}$. 
For any  positive  integer $n (> \frac{\log({2}/{\delta})}{\log 2} >8)$,  we have  
\begin{align}                                                                                                         \label{estimate_h_n}
h_n(-1+\delta+it)  \ll \exp\left( \frac{C}{\delta} \right)
\end{align}
with an absolute constant $C>0$. 
\end{lemma}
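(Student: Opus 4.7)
The plan is to pass to absolute values inside the Euler product defining $h_n$ and estimate prime by prime. Writing $h_n(s) = \prod_p(1 + A_p(s))$ with
\begin{align*}
A_p(s) := \sum_{m=1}^{2^n-1}\frac{1}{p^{ms+m+1}(1+1/p)} - \frac{1}{p^{2^n s + 2^n}(1+1/p)},
\end{align*}
at $s = -1 + \delta + it$ we have $|p^{ms+m+1}| = p\cdot p^{m\delta}$ and $|p^{2^n s + 2^n}| = p^{2^n\delta}$, so the triangle inequality yields the $t$-independent bound
\begin{align*}
|A_p| \leq \sum_{m=1}^{2^n-1} \frac{1}{p^{m\delta+1}(1+1/p)} + \frac{1}{p^{2^n\delta}(1+1/p)}.
\end{align*}

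I would then handle the two contributions separately. The subtracted term is tamed by the hypothesis $n > \log(2/\delta)/\log 2$, which forces $2^n\delta > 2$ and hence $p^{2^n\delta} \geq p^2$; consequently $\sum_p p^{-2^n\delta}(1+1/p)^{-1} \ll \sum_p p^{-2} = O(1)$. For the $m$-sum I would dominate by the full geometric series to obtain $\frac{1}{p(p^\delta-1)}$. The elementary inequality $p^\delta - 1 \geq \delta \log p$, immediate from $e^y \geq 1+y$, then bounds this by $\frac{1}{\delta p \log p}$; summing over primes and invoking convergence of $\sum_p 1/(p\log p)$ (which follows from the prime number theorem via the integral test) yields $\sum_p \frac{1}{p(p^\delta-1)} = O(1/\delta)$.

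Finally, combining $|1 + A_p| \leq 1 + |A_p|$ with $\log(1+y) \leq y$ for $y \geq 0$ gives
\begin{align*}
\log |h_n(-1+\delta+it)| \leq \sum_p \log(1+|A_p|) \leq \sum_p |A_p| = O(1/\delta),
\end{align*}
and exponentiating produces the claim. There is no substantial obstacle in this argument; the proof is essentially a matter of recognising that the hypothesis on $n$ has been tailored precisely to make the subtracted Euler-factor term small enough to be absorbed into $\sum_p p^{-2}$, while the growth of order $1/\delta$ comes from converting $1/(p^\delta - 1)$ into $1/(\delta \log p)$ over the bulk of small primes. The only vigilance required is to ensure all implied constants are independent of both $t$ and $n$, which is automatic once the bound has been reduced to $t$-free, $n$-uniform estimates on the primes.
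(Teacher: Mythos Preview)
Your argument is correct and follows essentially the same route as the paper: take absolute values in the Euler product, use the hypothesis $2^n\delta>2$ to reduce the subtracted term to $O(p^{-2})$, sum the geometric series to get $1/((p+1)(p^\delta-1))$, and apply $\log(1+y)\le y$. The only difference is in the final prime sum: the paper estimates $\sum_p 1/((p+1)(p^\delta-1))$ by explicit partial summation against $\pi(u)\ll u/\log u$ and then uses $u^\delta-1\gg\delta\log u$ inside the resulting integrals, whereas you apply $p^\delta-1\ge\delta\log p$ termwise first and invoke convergence of $\sum_p 1/(p\log p)$ directly---a slightly cleaner packaging of the same computation.
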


\begin{proof}
We use  (\ref{hns}) and  the inequality   
$
\pi(u) \leq C_1\frac{u}{\log u}
$  for any positive number $u \geq 2$  
to obtain an upper bound for  
the function $h_n(-1+\delta+it)$ for any  positive  integer $n (> \frac{\log({2}/{\delta})}{\log 2} >8)$, 
namely  
\begin{align*}
|h_{n}&(-1+\delta+it)|  
= \prod_p\l( 1+\sum_{m=1}^{2^n-1}\frac{1}{p^{m\delta+1}\l( 1+\frac{1}{p} \r)}+\frac{1}{p^{2^n\delta}\l( 1+\frac{1}{p} \r)} \r) \\
&\leq \prod_p\l( 1+\sum_{m=1}^{\infty}\frac{1}{p^{m\delta}(p+1)}+\frac{1}{p(p+1)} \r) \\
&= \prod_p \l( 1+\frac{1}{(p+1)(p^\delta-1)}+\frac{1}{p(p+1)} \r) \\
&=\exp\l({\sum_p\log{\l( 1+\frac{1}{(p+1)(p^\delta-1)}+\frac{1}{p(p+1)} \r)}}\r) \\
&\leq\exp\l( \sum_p\l( \frac{1}{(p+1)(p^\delta-1)}+\frac{1}{p(p+1)} \r) \r) 
\ll\exp\l( \sum_p\frac{1}{(p+1)(p^\delta-1)} \r) \\
&=\exp{\lim_{\tau\rightarrow \infty}\l( \frac{\pi(\tau)}{(\tau-1)(\tau^\delta-1)}+\int_2^{\tau}\l( \frac{\pi(u)}{(u-1)^2(u^\delta-1)}+\frac{\pi(u)\delta u^\delta}{(u-1)^2(u^\delta)^2} \r)du \r) }  \\
&\leq\exp{\l( C_1\int_2^\infty \frac{du}{\frac{(u-1)^2}{u}(u^\delta-1)\log{u}}+C_1\delta\int_2^\infty\frac{du}{\frac{(u-1)^2(u^\delta-1)^2}{u^{1+\delta}}\log{u}} \r)} \\
&=\exp{\l( \frac{2}{\delta}C_1\int_2^\infty \frac{du}{\frac{(u-1)^2}{u}(\log{u})^2}+\frac{8}{\delta}C_1\int_2^\infty\frac{du}{\frac{(u-1)^2}{u}(\log{u})^3} \r)} \\
&\ll \exp\left( \frac{C}{\delta} \right)
\end{align*}
with an absolute constant $C > 0$.  This completes the proof of Lemma \ref{lem11}. 
\end{proof}


\begin{lemma} \label{lem2}
Assume that  the Riemann hypothesis is true. Then,  there exists a number $t \in [T, T+1]$ such that 
\[                                                                                                     
\zeta(\sigma+it)             \ll t^{\varepsilon} \qquad  {\rm and} \qquad  
\frac{1}{\zeta(\sigma+it)}   \ll t^{\varepsilon} 
\]
for  every $1/2\leq \sigma \leq 2$ and any sufficiently large real number $T > 0$.
\end{lemma}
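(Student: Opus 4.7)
The plan is classical under RH and splits naturally into two parts: selecting $t \in [T, T+1]$ that is well separated from the ordinates of non-trivial zeros, and then invoking standard Lindel\"of-type estimates valid under RH.

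For the first part, by the Riemann--von Mangoldt formula one has $N(T+2) - N(T-1) \leq C_{0} \log T$ for an absolute constant $C_{0}$. Setting $\delta = (4 C_{0} \log T)^{-1}$, the union $\bigcup_{\gamma} (\gamma - \delta, \gamma + \delta)$ over the zero ordinates $\gamma$ in $[T-1, T+2]$ has Lebesgue measure at most $1/2$, so by pigeonhole there exists $t \in [T, T+1]$ satisfying $|t - \gamma| \geq \delta$ for every non-trivial zero $\rho = 1/2 + i\gamma$ with $T-1 \leq \gamma \leq T+2$. Such a $t$ is automatically at distance $\geq \delta$ from the critical line singularities of $1/\zeta$ in the horizontal direction.

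For the second part, the upper bound $\zeta(\sigma + it) \ll t^{\varepsilon}$ uniformly in $1/2 \leq \sigma \leq 2$ is the Lindel\"of hypothesis, which is a consequence of RH (see Titchmarsh, \emph{The Theory of the Riemann Zeta-Function}, Chapter XIV); this step needs no special property of $t$. The companion bound $1/\zeta(\sigma + it) \ll t^{\varepsilon}$ is direct from RH for $\sigma \geq 1/2 + 1/\log\log t$ (Titchmarsh, Theorem 14.16). To cover the remaining narrow strip $1/2 \leq \sigma < 1/2 + 1/\log\log t$, my plan is to exploit the zero-spacing from the first part via the standard partial fraction expansion
\[
\frac{\zeta'}{\zeta}(u + it) = \sum_{|\gamma - t| \leq 1} \frac{1}{u + it - \rho} + O(\log t),
\]
combined with the bounds $|u + it - \rho| \geq |t - \gamma| \geq \delta$ and $\#\{\gamma : |\gamma - t| \leq 1\} = O(\log t)$. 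Integrating from $\sigma_{0} = 2$ (where $\log \zeta$ is trivially $O(1)$) down to $\sigma$ along the horizontal segment then yields, together with a Selberg-type refinement, a bound $|\log \zeta(\sigma + it)| = o(\log t)$, whence $1/\zeta(\sigma + it) \ll t^{\varepsilon}$.

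I expect the main obstacle to be precisely the sharp control of $1/\zeta$ on the critical line itself: a direct integration of the partial fraction expansion with the crude spacing $\delta \asymp 1/\log t$ only yields the much weaker bound $\exp(O((\log t)^{2}))$, so achieving the desired $t^{\varepsilon}$ estimate requires either a Selberg-type refinement or a Hadamard three-lines / Borel--Carath\'eodory argument applied carefully on a subregion of $1/2 \leq \sigma \leq 2$ that stays away from the zeros singled out in the first step.
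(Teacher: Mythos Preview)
Your outline is correct and ultimately aligns with the paper, whose entire proof consists of citing Titchmarsh's textbook: equations (14.2.5) and (14.14.1) for the upper bound on $\zeta$, and (14.16.2) for the bound on $1/\zeta$. What you have sketched is essentially the content of Titchmarsh's proof of (14.16.2): one selects $t \in [T,T+1]$ at distance $\gg 1/\log T$ from every zero ordinate (exactly your pigeonhole step), uses the RH bound on $\log\zeta(s)$ valid for $\sigma \geq 1/2 + 1/\log\log t$ (this is Titchmarsh's Theorem~14.2 rather than 14.16, a minor mis-citation on your part), and then bridges the remaining narrow strip down to $\sigma = 1/2$ via a Borel--Carath\'eodory argument on a small disc avoiding the zeros---precisely the refinement you anticipate in your final paragraph. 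So the obstacle you flag is real, the fix you propose (Borel--Carath\'eodory rather than naive integration of the partial-fraction expansion, which as you note only gives $\exp(O((\log t)^{2}))$) is the right one, and it is already carried out in the reference the paper invokes; strictly speaking your only task here is to point to Titchmarsh (14.16.2) rather than reprove it.
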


\begin{proof} \ 
The first assertion is given by (14.2.5) and (14.14.1), and the second assertion is given by (14.16.2) in the textbook \cite{T},
respectively.
\end{proof}


                                          \section{Proof  of Theorem \ref{th1}}


\begin{proof}
Suppose that the Riemann Hypothesis is true, and 
all the  zeros $\rho$ on the critical   line  of the Riemann zeta-function $\zeta(s)$ are  simple. 
Let $x$ be any sufficiently large real number,   and  let  $\delta = 1 / \sqrt{\log{x}}$.
Let $T \in [x^{4}, x^4 + 1]$ satisfy the condition Lemma \ref{lem2} and
$n$ be a positive integer satisfying  $n~(> \frac{\log({2}/{\delta})}{\log 2} )$.
We make use of Lemma \ref{lem1} and (5.19) in H. Montgomery and R. C. Vaughan \cite{MV} with $\sigma_0 := 1 + \frac{1}{\log{x}}$ to obtain 
\begin{align}                                                                                                      
S_{k}(x)
&= \frac{1}{2\pi i}\int_{\sigma_0 - i\infty}^{\sigma_0 + i\infty}F(s)\frac{x^s}{s(s+1)(s+2)\cdots (s+k)}ds 	\label{S1}	\\
&=\frac{1}{2\pi i}\int_{\sigma_0 - iT}^{\sigma_0 + iT}F(s)\frac{x^s}{s(s+1)(s+2)\cdots (s+k)}ds 
			+ O\l({x}T^{-k+\varepsilon}\r).                                                                        \label{S}
\end{align}
Now, we move the line of integration to ${\rm Re}~s = -1 + \delta$.  
In the rectangular contour  formed  by  the line segments  joining  the points 
$\sigma_0-iT$, $\sigma_0+iT$, $-1+\delta+iT$, $-1+\delta-iT$  and  $\sigma_0-iT$ counter-clockwise,
we observe that  $s=1$ is a simple pole, and   
                 $s=-\frac{1}{2}+i \gamma$ is also a simple pole of the integrand. 
Thus we get the main term from the sum of the residues coming from the poles  $s=1$ and $s=-\frac{1}{2}+i\gamma$. 
That is,  
\begin{align}                                                                                                      
&\frac{1}{2\pi i}\int_{\sigma_0-iT}^{\sigma_0+iT}F(s)\frac{x^s}{s(s+1)(s+2)\cdots (s+k)}ds                        \label{S_T1}  \\
&=\frac{1}{2\pi i}\left\{\int_{-1+\delta+iT}^{\sigma_0+iT}
+ \int_{-1+\delta-iT}^{-1+\delta+iT} +  \int_{\sigma_0-iT}^{-1+\delta-iT}\right\}
F(s)\frac{x^s}{s(s+1)\cdots (s+k)}ds          \nonumber                                                  \\
&  + \sum_{0<|\gamma|<T}\underset{s=-\frac{1}{2}+i\gamma}{\rm Res}\l(F(s)\frac{x^s}{s(s+1)\cdots(s+k)}\r)
+ \underset{s=1}{\rm Res}\l(F(s)\frac{x^s}{s(s+1)\cdots(s+k)}\r).        \nonumber         
\end{align}
The last term on the  right-hand side of (\ref{S_T1}) are evaluated by the second author and Kiuchi  \cite{IK},
who  have  calculated  that 
\begin{align*}
\underset{s=1}{\rm Res}\l(\frac{F(s)x^s}{s(s+1)\cdots(s+k)}\r)
= \frac{\zeta(4)}{(k+1)!\zeta(2)}\prod_p{\l( 1+\frac{1}{p^2(p+1)}-\frac{1}{p^3(p+1)} \r)x}.                            
\end{align*}

Furthermore, we  have     
\begin{align*}
&\sum_{0<|\gamma|<T}\underset{s=-\frac{1}{2}+i\gamma}{\rm Res}\l(F(s)\frac{x^s}{s(s+1)\cdots(s+k)}\r) \\ 
&= \sum_{0 < \gamma < T}{\rm Re}~\bigg(\frac{\zeta\l(  -\frac{1}{2}+ i\gamma \r)\zeta\l( 2^{n-1}+2^n i\gamma \r)}{\zeta'\l( \frac{1}{2} + i\gamma \r)}h_n\l(- \frac{1}{2} + i\gamma \r)\\
& \qquad \times \frac{x^{i\gamma}}{\l( -\frac{1}{2} + i\gamma \r)(\frac{1}{2} + i\gamma)
 (\frac{3}{2} + i\gamma) \cdots (k - \frac{1}{2} + i\gamma)}\bigg)x^{- \frac12}
\end{align*}  
by the assumptions.\\

Let  $T\geq T_0$,  where $T_0$ is a sufficiently large real number.  
Denote by $Q_k(x)$ the second term  (the left vertical line segment)  of the integral  
on the right-hand side  of (\ref{S_T1}). Using (\ref{estimate_h_n}), we have
\begin{align}                                                                                             \label{Q}
&Q_{k}(x) 
:= \frac{1}{2\pi} \int_{-T}^{T}
\frac{F\l(-1+\delta+it\r)x^{-1+\delta+it}}{(-1+\delta+it)(\delta+it)\cdots (k-1+\delta+it)}dt  \\
=&\frac{x^{-1+\delta}}{2\pi} \l(\int_{|t|\leq T_0}+\int_{T_{0}\leq |t| \leq T}\r)\frac{\zeta(-1+\delta+it)\zeta(2^n\delta+2^n it)}{\zeta(\delta+it)}
\times   \nonumber \\ 
& \quad  \frac{ h_n(-1+\delta+it)x^{it}}
{(-1+\delta+it)(\delta+it)\cdots (k-1+\delta+it)}dt  \nonumber \\
\ll& \frac{x^{-1+\delta}}{\delta(k-1)!}\exp\l( \frac{C}{\delta} \r)  \nonumber  \\
&+    x^{-1+\delta}\exp\l( \frac{C}{\delta} \r)\int_{T_{0}\leq |t| \leq T} 
\l|\frac{t^{\frac{3}{2}-\delta}\zeta(2-\delta - it)}
{t^{\frac{1}{2}-\delta}\zeta(1-\delta-it) t^{k+1}}\r| dt,  \nonumber 
\end{align}
where $C > 0$ is an absolute constant.
Using  Lemma \ref{lem2} we have 
\begin{align}                                                                                                     
	Q_{k}(x) &\ll x^{-1+\frac{C}{\sqrt{\log{x}}}}\left( \frac{\sqrt{\log{x}}}{(k-1)!} + \frac{1}{k} \right)                    \label{Q1k}
\end{align}
for any positive integer $k\geq 2$. 

Next we estimate  the contributions  coming  from  the upper  horizontal line (the lower horizontal line is similar). 
First we split the integral as
\begin{align*}
&\int_{-1+\delta+iT}^{\sigma_0+iT}F(s)\frac{x^s}{s(s+1)\cdots(s+k)}ds\\
&=\l(\int_{\frac{1}{2}+iT}^{\sigma_0+iT}+\int_{-\frac{1}{2}+iT}^{\frac{1}{2}+iT}+\int_{-1+\delta+iT}^{-\frac{1}{2}+iT} \r)F(s)\frac{x^s}{s(s+1)\cdots(s+k)}ds\\
&=:I_1 + I_2 + I_3, 
\end{align*}
say. We use the functional equation for the Riemann zeta-function, Lemmas \ref{lem1} and \ref{lem2} to obtain
\begin{align*}
	|I_1| 	
	&\leq \int_{\frac{1}{2}}^{\sigma_0}|\frac{\zeta(\sigma+iT)\zeta(2^n(\sigma+iT)+2^n)h_n(\sigma+iT)x^\sigma}{\zeta(1+\sigma+iT)T^{k+1}} |d\sigma\\
  &\ll\int_{\frac12}^{\sigma_0}\frac{T^\varepsilon x}{T^{k+1}}d\sigma
  \ll xT^{-k+\varepsilon}.
\end{align*}
Similarly, we have
\begin{align*}
  |I_2|
  &\ll \int_{-\frac12}^\frac12|T^{\frac12-\sigma}\zeta(1-\sigma-iT)T^\varepsilon h_n(\sigma+iT)\frac{x^\sigma}{T^{k+1}}|d\sigma\\
  &\ll \int_{-\frac12}^\frac12 T^{2\varepsilon}\frac{x^\frac12}{T^{k+1}}d\sigma
  \ll x^\frac12 T^{-k+2\varepsilon},
\end{align*}
and 
\begin{align*}
  |I_3|		
  &\ll \int_{-1+\delta}^{-\frac12}|\frac{T^{\frac12-\sigma}\zeta(1-\sigma-iT)x^\sigma}{T^{-\frac12-\sigma}\zeta(-\sigma-iT)T^{k+1}}|d\sigma \\
  &\ll \int_{-1+\delta}^{-\frac12}\frac{T^{\frac32-\delta}T^\varepsilon x^{-\frac12}}{T^{k+1}}
  \ll x^{-\frac12}T^{\frac12-\delta+\varepsilon-k}
\end{align*}
for a positive number  $T \ (x^{4}\leq T\leq x^{4}+1)$.

Hence using  horizontal lines defined by  ${\rm Re}s=\pm T$ to move the line of integration in (\ref{S_T1}),
we find that the total contribution of the horizontal lines in absolute value is  
\begin{align}                                                                                           \label{Q12}
&\ll xT^{-k+\varepsilon}.
\end{align}
Applying the relation (\ref{S_T1})  and    the error estimates   (\ref{Q1k}),   (\ref{Q12})  to  (\ref{S}), 
we obtain,  for $k\geq 2$,  
\begin{align}                                                                                                     
S_{k}(x)				\label{EEE}
=  \frac{\zeta(4)h(1)}{(k+1)!\zeta(2)}x &	+ {Y}_{k,n}(x, T)x^{- \frac{1}{2}}\\    \nonumber 
&+ O\l( x^{-1+\frac{C}{\sqrt{\log{x}}}}\left( \frac{\sqrt{\log{x}}}{(k-1)!} + \frac{1}{k} \right) \r) 
\end{align}
with  a positive number  $T\ (x^{4}\leq T \leq x^{4}+1)$, 
where  
\begin{align}                                                                                            \label{YYY}
&{Y}_{k,n}(x, T) \\
&:={\rm Re} \sum_{0< \gamma <T}\frac{\zeta\l( -\frac12+i\gamma \r)\zeta(2^{n-1}+2^ni\gamma)h_n\l(-\frac12+i\gamma\r)}{\zeta'\l( \frac{1}{2} + i\gamma \r)}    \nonumber \\ 
& \qquad  \times  \frac{x^{i\gamma}}{\l( -\frac{1}{2} + i\gamma \r)\l(\frac{1}{2} + i\gamma\r) 
  \cdots \l(k - \frac{1}{2} + i\gamma\r)},   \nonumber 
\end{align}  
which completes the proof of    
the identity  (\ref{RPSI}).  
\end{proof}


\section{Proof of Theorem  \ref{th2}}


We use  (\ref{YYY}), Lemma \ref{lem2}, the functional equation for the Riemann zeta-function and Stirling's formula for $\chi(s)$ 
to  obtain  
\begin{align*}
&\sum_{0<\gamma < T}{\rm Re}\bigg(
 \frac{\chi\l( -\frac12+i\gamma \r)\zeta\l( \frac32+i\gamma \r)\zeta(2^{n-1}+2^ni\gamma)h_n\l(-\frac12+i\gamma\r)}{\zeta'\l( \frac{1}{2} + i\gamma \r)}    \\
& \qquad  \times  \frac{x^{i\gamma}}{\l( -\frac{1}{2} + i\gamma \r)\l(\frac{1}{2} + i\gamma\r) 
  \cdots \l(k - \frac{1}{2} + i\gamma\r)} \bigg)  \\
&\ll \sum_{0<\gamma < T}
\frac{1}{\gamma^{k-\frac{1}{2}- \varepsilon} |\zeta'(\frac{1}{2}+i\gamma)|}.  
\end{align*}
It suffices to show that ${Y}_{k,n}(x, T)$ converges for $k=2$.  
Using  (\ref{GH})   and partial summation   
we obtain   
\begin{align*}
&\sum_{0<\gamma < T}
\frac{1}{\gamma^{\frac{3}{2}- \varepsilon} |\zeta'(\frac{1}{2}+i\gamma)|}  
\ll \left[\frac{J_{-\frac12}(t)}{t^{\frac32 -\varepsilon}}\right]_{14}^{T}
+ \int_{14}^{T}\frac{J_{-\frac12}(t)}{t^{\frac52 -\varepsilon}}dt \ll 1,
\end{align*}
which implies the estimate $E_{k}(x)=O\l(x^{-\frac12}\r)$ for any positive integer $k\geq 2$.
\qed


\section{Proof of Theorem  \ref{th3} and Corollary \ref{th4}}

\begin{proof}[Proof of Theorem \ref{th3}]
We use reductio ad absurdum. Namely, we suppose that there exists a number $\frac{1}{5}>\varepsilon_0>0$ such that 
\begin{align*}
S_k(x)-\frac{\zeta(4)h(1)}{(k+1)!\zeta(2)}x\leq x^{\Theta-1-\varepsilon_0} \hspace{15pt} (x>K(\varepsilon_0)).
\end{align*}
Define $G(s)$ by 
\begin{align*}
G(s):=\int_1^\infty \frac{S_k(x)-\frac{\zeta(4)h(1)}{(k+1)!\zeta(2)}x-x^{\Theta-1-\varepsilon_0}}{x^{s+1}}dx
\end{align*}
for $\sigma>1$.
We use the Mellin transform, and we have
\begin{align*}
G(s)=\frac{\zeta(s)\zeta(4s+4)}{\zeta(s+1)s\cdots(s+k)}h_2(s)-\frac{\zeta(4)h(1)}{(k+1)!\zeta(2)}\frac{1}{s-1}-\frac{1}{s+1-\Theta+\varepsilon_0}
\end{align*}
by $(\ref{S1})$. So, we have
\begin{align*}
&\int_1^\infty  \frac{S_k(x)-\frac{\zeta(4)h(1)}{(k+1)!\zeta(2)}x-x^{\Theta-1-\varepsilon_0}}{x^{s+1}}  dx \\
&=\frac{\zeta(s)\zeta(4s+4)}{\zeta(s+1)s\cdots(s+k)}h_2(s)-\frac{\zeta(4)h(1)}{(k+1)!\zeta(2)}\frac{1}{s-1}-\frac{1}{s+1-\Theta+\varepsilon_0} \\
\end{align*}
for $\sigma>1$. 
Notice that the integral term on the left-hand side is absolutely convergent for $\sigma>-1+\Theta-\varepsilon_0$ 
since the right-hand side is a definite value when $s=\sigma>-1+\Theta-\varepsilon_0$. However, there is a zero of $\zeta(s+1)$  from the definition of $\Theta$, and so
\begin{align*}
\frac{\zeta(s)\zeta(2s+2)}{\zeta(s+1)s\cdots(s+k)}h(s)-\frac{\zeta(4)h(1)}{(k+1)!\zeta(2)}\frac{1}{s-1}-\frac{1}{s+1-\Theta+\varepsilon_0}
\end{align*}
is not regular for $\sigma>-1+\Theta-\varepsilon_0$, which is a contradiction. Therefore 
\begin{align*}
E_k(x)=S_k(x)-\frac{\zeta(4)h(1)}{(k+1)!\zeta(2)}x=\Omega_+(x^{\Theta-1-\varepsilon})
\end{align*}
holds for any positive $\varepsilon$.

Similarly, we have
\begin{align*}
E_k(x)=S_k(x)-\frac{\zeta(4)h(1)}{(k+1)!\zeta(2)}x=\Omega_-(x^{\Theta-1-\varepsilon}).
\end{align*}
\end{proof}

\begin{proof}[Proof of Corollary \ref{th4}]
The Riemann Hypothesis is sufficient for the statement that there exists 
an integer $k\geq 2$ such that $E_k(x)\ll x^{-\frac{1}{2}+\varepsilon}$ by \eqref{SSS}. 
We also see that the inverse assertion holds by Theorem \ref{th3}.
\end{proof}

%






\begin{thebibliography}{99}

\bibitem{G} S. M. Gonek, On negative moments of the Riemann zeta-function, {\it Mathematika} {\bf 36} (1989), 71--88

\bibitem{H} D. Hejhal,  On the distribution of $\log{\left|\zeta'(\frac{1}{2}+it)\right|}$, {\it Number theory, 
trace formula and discrete groups}  (ed. K. E. Aubert, E. Bombieri and D. Goldfeld, Academic Press, San Diego, 1989) 343--370.


\bibitem{IK} S. Inoue and I. Kiuchi,    
\ Riesz mean of the Dedekind function,    
{\it Kyushu J.\ Math.}   {\bf 71}  (2017),  105--114.


\bibitem{IK2} S. Inoue and I. Kiuchi, Riesz means of the Euler totient function, {\it Funct.\ Approx.\ Comment.\ Math}, to appear.


\bibitem{MV} H. L. Montgomery and R. C. Vaughan, {\it Multiplicative Number Theory I. Classical Theory}, 
Cambridge Studies in Advanced Mathematics, Cambridge University press, 2007.


\bibitem{SS1} A. Sankaranarayanan  and  S. K. Singh,    
\ On the Riesz means of $\frac{n}{\phi(n)}$,  
{\it Hardy--Ramanujan J.} {\bf 36}  (2013),  8--20.      


\bibitem{SS2} A. Sankaranarayanan  and  S. K. Singh,    
\ On the Riesz means of $\frac{n}{\phi(n)}$ - II,  
{\it Arch.\ Math.}  (Basel) {\bf 103}  (2014),  329--343.      


\bibitem{SS3} A. Sankaranarayanan  and  S. K. Singh,    
\ On the Riesz means of $\frac{n}{\phi(n)}$ -  III,  
{\it Acta Arith.}   {\bf 170}  (2015),  275--286.      


\bibitem{SI} R. Sitaramachandrarao,    
\ On an error term of Landau - II,    
{\it Rocky Mountain J.\ Math.}    {\bf 15}  (1985),  579--588.      


\bibitem{T}  E. C. Titchmarsh,\ {\it The Theory of the Riemann Zeta-Function}, 
Second Edition, Edited and with a preface by D. R. Heath--Brown, 
The Clarendon Press, Oxford University Press, New York, 1986.  



\end{thebibliography}
\end{document}